\title{4-strand Burau is unfaithful modulo $5$}
\author{Joel Gibson}
\address{J.~Gibson: Sydney, Australia}
\email{joel@jgibson.id.au}
\date{}
\author{Geordie Williamson}
\address{G.~Williamson: School of Mathematics and Statistics, University of Sydney, Australia}
\email{g.williamson@sydney.edu.au}
\date{}
\author{Oded Yacobi}
\address{O.~Yacobi: School of Mathematics and Statistics, University of Sydney, Australia}
\email{oded.yacobi@sydney.edu.au}
\date{}
\def\GL{\mathrm{GL}}
\newcommand{\sB}{\mathsf{B}}
\newcommand{\sR}{\mathscr{R}}
\newcommand{\FF}{\mathbb{F}}
\newcommand{\NN}{\mathbb{N}}
\newcommand{\ZZ}{\mathbb{Z}}
\newcommand{\pl}{\mathsf{projlen}}
\renewcommand{\phi}{\varphi}
\renewcommand{\tilde}[1]{\widetilde{#1}}
\def\Ddots{\mathinner{\mkern1mu\raise\p@
\vbox{\kern7\p@\hbox{.}}\mkern2mu
\raise4\p@\hbox{.}\mkern2mu\raise7\p@\hbox{.}\mkern1mu}}
\newcommand{\val}{\text{val}}
\newcommand\into\hookrightarrow
\newcommand\onto\twoheadrightarrow
\newcommand{\nc}{\newcommand}
\nc{\la}{\lambda}
\nc{\Iso}{\mathsf{Iso}}
\nc{\Irr}{\mathsf{Irr}}
\nc{\Id}{\mathrm{Id}}
\numberwithin{equation}{section}
\newtheorem{Theorem}[equation]{Theorem}
\newtheorem{Proposition}[equation]{Proposition}
\newtheorem{Remark}[equation]{Remark}
\def\gchange#1{{\color{blue}#1}}
\begin{document}

\begin{abstract}
We introduce a new algorithm for finding kernel elements in the
   Burau representation. Our algorithm applies
  reservoir sampling to a statistic on matrices which
  is closely correlated with Garside length. Using this we exhibit an explicit kernel element in
  the Burau representation on 4-strands reduced modulo 5. 
\end{abstract}

\maketitle

\section{Introduction}

The Burau representation of the $n$-strand braid group $\pi_n:B_n \to \GL_{n-1}(\ZZ[v,v^{-1}])$, plays a prominent role in many applications of braid groups, including to geometry, topology and mathematical physics \cite{Birman}.
A key question underlying many of these developments is determining
when  $\pi_n$ is faithful.

It is not difficult to prove that $\pi_3$ is faithful (cf. Propostion \ref{prop:3burau}), but despite
intensive study, there wasn't substantial progress on this question until 1991, when
Moody proved that $\pi_n$ is not faithful for $n\geq9$ \cite{Moody}.
His work was developed further by Long and Paton to show
unfaithfulness for $n \geq 6$ \cite{LP93}. Bigelow extended this also to $n=5$ \cite{Bigelow99}.

The case $n=4$ remains open, and has attracted considerable attention. One reason for this is that a negative answer   
  provides a non-trivial knot with trivial Jones polynomial, answering a central question in knot theory
  \cite{Bigelow02, Ito}. 

Henceforth we set $\pi=\pi_4$.
Cooper and Long approached the faithfulness of $\pi$ by reducing
modulo primes.  That is,  they considered the composition of $\pi$
with the natural map $\GL_{3}(\ZZ[v,v^{-1}]) \to
\GL_{3}(\FF_p[v,v^{-1}])$, where $\FF_p$ is the field with $p$
elements \cite{CL97}.  Following their work, we denote this
representation $\pi\otimes \FF_p$. The study of the faithfulness of
$\pi \otimes \FF_p$ is interesting for several reasons:
\begin{enumerate}
\item Examples when $\pi \otimes \FF_p$ is unfaithful can give
    insights into the difficulty of establishing the faithfulness of
    $\pi$.
\item It might be the case that $\pi \otimes \FF_p$ is unfaithful modulo all
    primes, yet $\pi$ is faithful. Controlling the kernel modulo infinitely many primes might
    provide a route to establishing
    faithfulness of $\pi$.
\end{enumerate}

One of the main results in \cite{CL97} is that $\pi\otimes\FF_2$ is not faithful.  In \cite{CL98} the same authors extended their methods to show that $\pi\otimes\FF_3$ is also not faithful.  They noted their program runs into obstacles at $p=5$: 
``This case remains open and has some features which suggest it may be different to the first two primes." Our main result resolves this case.

\begin{Theorem}
The representation $\pi\otimes \FF_5$ is not faithful.
\end{Theorem}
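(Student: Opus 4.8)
The plan is to exhibit an explicit braid word $\beta$ in the standard generators $\sigma_1,\sigma_2,\sigma_3$ of $B_4$ such that (i) $\pi(\beta)$ reduces to the identity matrix in $\GL_3(\FF_5[v,v^{-1}])$, and (ii) $\beta\neq 1$ in $B_4$. Given such a $\beta$ the theorem is immediate, since $\beta$ is then a nontrivial element of $\ker(\pi\otimes\FF_5)$. Both verifications are finite and mechanical: for (i) one forms the symbolic matrix product over $\FF_5[v,v^{-1}]$ of the (fixed, explicit) reduced Burau matrices of the letters of $\beta$ and their inverses and checks that it equals $\Id$; for (ii) one may compute the left Garside normal form of $\beta$ in $B_4$ and check that it is nonempty, or — more simply in practice — observe that the \emph{integral} image $\pi(\beta)\in\GL_3(\ZZ[v,v^{-1}])$ is already not the identity, so that $\pi(\beta)\neq\Id$ forces $\beta\neq 1$. (Concretely, some entry of $\pi(\beta)-\Id$ is a nonzero element of $\ZZ[v,v^{-1}]$, necessarily with every coefficient divisible by $5$.) So once a candidate $\beta$ is in hand, the proof is a routine computation that an independent reader can repeat.

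The substance of the argument is the \emph{construction} of such a $\beta$, and for this I would run the search described in the abstract. One performs a guided random walk on $B_4$: starting from short words and repeatedly multiplying by random generators and their inverses, one records for each braid $\gamma$ encountered its reduced Burau image $(\pi\otimes\FF_5)(\gamma)\in\GL_3(\FF_5[v,v^{-1}])$ together with a real-valued statistic $s\big((\pi\otimes\FF_5)(\gamma)\big)$ measuring the ``size'' of that matrix — for instance the total span of $v$-degrees over its entries, or the number of nonzero coefficients. Empirically $s$ is closely correlated with the Garside length of $\gamma$, so that driving $s$ toward its minimal value $s(\Id)$ amounts to hunting for short representatives of the identity image. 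Since the set of explored braids with small $s$-value rapidly becomes unmanageably large, one keeps only a bounded-size sample of them, maintained by reservoir sampling, and generates further candidates by composing members of this reservoir with one another and with generators. Whenever two reservoir braids $\gamma_1\neq\gamma_2$ are found with $(\pi\otimes\FF_5)(\gamma_1)=(\pi\otimes\FF_5)(\gamma_2)$, the element $\gamma_1\gamma_2^{-1}$ lies in $\ker(\pi\otimes\FF_5)$; one discards it when its Garside normal form is trivial, otherwise feeds it back into the pool, iterating until the image itself reaches $\Id$.

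The hard part is that this is a heuristic with no termination guarantee: $B_4$ is infinite, the kernel — whose nonemptiness is precisely what we are proving — carries no a priori length bound, and the efficacy of the search rests entirely on the empirical correlation between the matrix statistic $s$ and Garside length, which I would not attempt to establish rigorously (only to exploit). Thus the difficulty lies not in \emph{certifying} the final answer, which as noted above is a finite check, but in producing a viable candidate at all and then reducing it to a word short enough to display; making the reservoir-sampling search effective enough to do this in practice is where most of the work goes.
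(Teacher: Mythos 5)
Your overall architecture matches the paper's: produce one explicit $\beta\in B_4$ with $(\pi\otimes\FF_5)(\beta)=\Id$ and $\beta\neq 1$, and note that both facts are certified by a finite symbolic computation. Your verification logic is sound; in particular the dichotomy that either $\pi(\beta)\neq\Id$ over $\ZZ[v,v^{-1}]$ (which alone forces $\beta\neq 1$) or else one falls back on a nontrivial Garside normal form is exactly right, and the paper's witness $\kappa=\Delta^{-27}\sigma$ (Garside length $54$, Artin length $162+27\cdot 6$) is certified this way. Your guiding statistic, the span of $v$-degrees over the entries, is precisely the paper's $\pl(A)=\deg(A)-\val(A)$, and reservoir sampling is indeed the sampling mechanism used.

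The gap is twofold. First, no witness is exhibited, and for a theorem of this kind the witness \emph{is} the proof: everything before "once a candidate $\beta$ is in hand" is, as you concede, a heuristic with no termination guarantee, so as written nothing has been proved. Second, and more substantively, the search you describe is not the paper's and is unlikely to succeed. The paper does not run a random walk on words in $\sigma_i^{\pm 1}$ with a single reservoir and birthday-style collision detection. It enumerates Garside normal forms $\Delta^d\tilde{w_1}\cdots\tilde{w_\ell}$ by appending one Garside suffix at a time, and maintains a \emph{separate} reservoir $\sB_{(\ell,m)}$ for every pair (Garside length $\ell$, projlen $m$). This stratification is the essential design choice: it protects the exponentially rare low-$\pl$ braids from being swamped by the generic ones (there are roughly $10^{40}$ braids of Garside length $54$), and it exploits $\pl(\pi(\sigma))=\pl(\pi(\sigma)\Delta)$ to hunt directly for positive braids whose Burau matrix is a monomial multiple of a power of $\pi(\Delta)$, rather than waiting for a collision of full matrices in an unstructured pool. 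Even with this structure the algorithm succeeds on only about $20\%$ of runs at $p=5$ and fails for $p=7$ and for Bigelow's $n=5,6$ elements, so the feasibility question you correctly flag as the crux is settled in the paper precisely by these design choices, which your proposal does not supply.
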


Some remarks on our main theorem:
\begin{enumerate}
\item Our approach, which we discuss in the next section, is heavily
  computational and is quite different from the existing
  methods. A complete implementation of our algorithm (in
    Python) is available on github \cite{github}.
\item We discover several kernel elements modulo $5$, the smallest of
  which is of Garside length 54\footnote{see \S \ref{sec:recollections} for the definition of Garside length}. We note that a straightforward computation
  shows that there are $10^{40}$ elements of this Garside length, thus
  finding this element by brute force search is not feasible.
\item We have made extensive searches with our algorithm to
    try to discover a kernel element of $\pi \otimes \FF_7$, without
    success. We are also unable to rediscover Bigelow's (integral)
    kernel elements for $n=5$ and $n=6$, which indicates the
    limitations of our algorithm.
\item Recently, Datta proved that $\pi$ is ``generically faithful'', in
  the sense that almost all braids are not in the kernel of $\pi$ as
  their length tends to infinity \cite{Datta}.  Datta also proves that
  this property extends to $\pi\otimes \FF_p$ for $p>3$.  Combined
  with our work, $\pi\otimes \FF_5$ therefore provides an example of a
  generically faithful representation which is not faithful.
\end{enumerate}

Our original goal was to use Machine Learning techniques to attack
this problem. Thus far we have not been able to get anything along
these lines to work---we outline our attempts in \S \ref{sec:future-directions}. We
do believe though that this is an excellent hard test case for Machine Learning
methods in pure mathematics. Our implementation \cite{github} is
intended to make it as straightforward as possible for interested
researchers to try Machine Learning techniques on this problem.

\subsection{Acknowledgements} The
observation that Garside length is closely correlated with $\pl$ (cf. \eqref{eq:projlen}) was
made in joint work of the third author with Nicolas Libedinsky. (It
has likely also been noticed by other researchers.) We would also like
to thank Robert Bryant, Giles Gardem, Libedinsky and Nolan Wallach for useful discussions. The second two
authors are supported by the Australian Research Council under the
grant DP230100654.

\section{Recollections}\label{sec:recollections}
We recall some basic facts about the braid group.
A presentation of $B_n$ in terms of the Artin generators is given by 
$$
B_n=\Big\langle \sigma_1,\hdots,\sigma_{n-1} \;|\; \sigma_i\sigma_j=\sigma_j\sigma_i \text{ if } |i-j|>1 \text{ and } \sigma_i\sigma_j\sigma_i=\sigma_j\sigma_i\sigma_j \text{ if }  |i-j|=1 \Big\rangle.
$$
On these generators, the \textbf{(reduced) Burau representation} $\pi_n:B_n \to \GL_{n-1}(\ZZ[v,v^{-1}])$ is defined as follows: 
\begin{align*}
\sigma_1 \mapsto \begin{pmatrix}
-v^2 & -v \\
0 & 1 
\end{pmatrix} \oplus& I_{n-3}, \;\;\; \sigma_{n-1} \mapsto I_{n-3}\oplus \begin{pmatrix}
1 & 0 \\
-v & -v^2
\end{pmatrix} \\
\sigma_i \mapsto I_{i-1}\oplus& \begin{pmatrix}
1 & 0 & 0\\
-v & -v^2 & -v \\
0 & 0 & 1
\end{pmatrix} \oplus I_{n-i-3}
\end{align*}
for $1<i<n-1$.
We call $\pi_n(\sigma)$ the \textbf{Burau matrix} of $\sigma$, and its reduction modulo a prime $p$, its $p$-Burau matrix.

Let $S_n$ denote the symmetric group with simple generators $s_1,\hdots,s_{n-1}$.  
For $w \in S_n$ write $\tilde{w} \in B_n$ for the corresponding positive lift in the braid group.  Explicitly, if $w=s_{i_1}s_{i_2}\cdots$ is a reduced expression, then $\tilde{w}=\sigma_{i_1}\sigma_{i_2}\cdots$.  For the longest element we write $\Delta=\tilde{w_0}$. Note that
\begin{align}\label{eq:delta-matrix}
    \pi_n(\Delta)=(-1)^{n+1}\begin{pmatrix}
0 &  & v^n\\
 & \Ddots &  \\
v^n &  & 0
\end{pmatrix}
\end{align}
Let $\mathscr{R}(w)$ (respectively $\mathscr{L}(w)$) denote the right (respectively left) descent set of $w\in S_n$.

A braid $\sigma\in B_n$ has a \textbf{Garside normal form (GNF)}
\begin{align}\label{eq:gnf}
    \sigma=\Delta^d \tilde{w_{1}}\cdots \tilde{w_{\ell}},
\end{align}
where $d\in \ZZ$, $w_{1} \neq w_0$, $w_{\ell}\neq e$, and for every $k$, $\mathscr{R}(w_{k}) \supseteq \mathscr{L}(w_{k+1})$.  This expression is unique, and $\ell_G(\sigma):=\ell$ is the \textbf{Garside length} of $\sigma$.
For $k\leq \ell$ we'll be interested in the associated \textbf{Garside prefix} of $\sigma$: $\Delta^d \tilde{w_{1}}\cdots \tilde{w_{k}}$.
A simple but useful observation: 
if $u\in S_n$ satisfies $\mathscr{R}(w_\ell) \supseteq \mathscr{L}(u)$, then the GNF of
$\sigma\tilde{u}$ is obtained  by concatenation:
\begin{align*}
    \sigma\tilde{u}=\Delta^{d} \tilde{w_{1}}\cdots \tilde{w_{\ell}}\tilde{u}.
\end{align*}
We term $\tilde{u}$ a
\textbf{Garside suffix} of $\sigma$. 

\section{Basic idea} Suppose one can find a statistic on  matrices
which detects  Garside length.  In other words, we have a function
$\mathsf{p}:\GL_{n-1}(\ZZ[v,v^{-1}]) \to \NN$ such that
$\mathsf{p}(\pi_n(\sigma))=C\cdot \ell_G(\sigma)$ for some constant $C
\gchange{>} 0$. Then this implies the faithfulness of $\pi_n$: 
$$
\pi_n(\sigma)=\Id \;\Longrightarrow\; \mathsf{p}(\pi_n(\sigma))=0 \;\Longrightarrow\; \ell_G(\sigma)=0 \;\Longrightarrow\; \sigma=\Delta^d
$$ 
for some $d \in \ZZ$, and $\pi_n(\Delta^d)=\Id$ if and only if
$d=0$.

On the other hand, if $\mathsf{p}$ is correlated with $\ell_G$ only for generic braids, then one can try to use  $\mathsf{p}$ as an ansatz to find kernel elements: braids with surprisingly low $\mathsf{p}$-value might point towards elements in the kernel. We explain both of these situations, beginning with the simple case of the $3$-strand Burau representation.  

First we define the statistic which we'll be using
throughout. 
Given $A$, a non-zero $r \times s$ array with entries in 
  Laurent polynomials in $v$, let $\deg(A)$ be the highest power of $v$ that occurs in an entry of $A$, and let $\val(A)$ be the lowest power.
We set
\begin{align}\label{eq:projlen}
\pl(A)=\deg(A)-\val(A).
\end{align} 
Note that multiplication of a Burau matrix by $\Delta$ does not affect $\pl$: 
\begin{align}\label{eq:projlen-and-delta}
\pl(\pi_n(\sigma))=\pl(\pi_n(\sigma)\Delta)=\pl(\Delta\pi_n(\sigma)).
\end{align}
\begin{Proposition}\label{prop:3burau}
    For any $\sigma\in B_3$ we have $\pl(\pi_3(\sigma))=2\ell_G(\sigma)$.  Hence, the $3$-strand Burau representation is faithful.
\end{Proposition}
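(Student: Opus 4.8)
The plan is to prove the identity $\pl(\pi_3(\sigma)) = 2\ell_G(\sigma)$ by induction on the Garside length $\ell = \ell_G(\sigma)$, exploiting that in $B_3$ the Garside generators $\tilde w$ with $e \neq w \neq w_0$ are precisely $\sigma_1$ and $\sigma_2$ (since $S_3$ has exactly two nontrivial proper elements, namely $s_1$ and $s_2$). By \eqref{eq:projlen-and-delta} we may ignore the $\Delta^d$ factor in the GNF \eqref{eq:gnf}, so it suffices to treat positive braids $\sigma = \sigma_{i_1}\cdots\sigma_{i_\ell}$ arising as $\tilde{w_1}\cdots\tilde{w_\ell}$ with $i_k \neq i_{k+1}$ (the descent condition $\mathscr R(w_k)\supseteq\mathscr L(w_{k+1})$ for length-one $w_k$ forces the indices to alternate). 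For such a word the matrix $\pi_3(\sigma)$ is an explicit product of the two $2\times 2$ generator matrices, and I would compute enough of its structure to pin down $\deg$ and $\val$.

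Concretely, I would first record the base cases: $\pl(I) = 0$, and $\pl(\pi_3(\sigma_1)) = \pl(\pi_3(\sigma_2)) = 2$ (each generator matrix has entries spanning $v^0$ to $v^2$). For the inductive step, suppose $\sigma = \tilde w_1 \cdots \tilde w_\ell$ is in GNF with $\ell \geq 1$ and multiply on the right by $\tilde w_{\ell+1}$, which must be $\sigma_j$ with $j$ different from the last index. The key is to maintain a normalized description of the Burau matrix: I would show by induction that, after factoring out a suitable power of $v$, $\pi_3(\tilde w_1\cdots\tilde w_\ell)$ has a definite ``staircase'' shape — for instance that its $\val$ is achieved in one fixed position, its $\deg$ in another, the gap between them is exactly $2\ell$, and multiplying by the next (alternating) generator raises the top degree by exactly $2$ while not lowering $\val$ (or symmetrically, in the normalization where we track $\Delta$-translates). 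Because there are only two generators and they alternate, there are essentially two cases to check at each step, and one can carry along an explicit $2\times 2$ matrix template whose entries are monomials times units, verifying that right multiplication by $\sigma_1$ or $\sigma_2$ transforms one template into the other with the degree span increasing by $2$. This gives $\pl(\pi_3(\sigma)) = 2\ell$, and in particular $\pl = 0 \iff \ell = 0 \iff \sigma = \Delta^d$; since $\pi_3(\Delta^d) = \Id \iff d = 0$ by \eqref{eq:delta-matrix}, faithfulness follows.

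The main obstacle is bookkeeping the precise ``template'' for $\pi_3$ of a Garside-normal-form word so that the claim ``$\deg$ goes up by exactly $2$ and $\val$ is unchanged'' is literally true at every step — naively, multiplying matrices could create cancellation that drops the degree, or a common factor of $v$ that one should strip off. The reason this does not happen here is exactly the descent (alternation) condition in the GNF: it is the analogue of left-greedy normal form, and it guarantees no ``backtracking'' that would cause cancellation. So the real content is to show that the GNF condition $\mathscr R(w_k)\supseteq\mathscr L(w_{k+1})$ translates, under $\pi_3$, into a no-cancellation statement for the monomial-leading-term structure of the product. Once that is set up, the degree/valuation computation is a short finite check, and the bound is exact rather than just a correlation.
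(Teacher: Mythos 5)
There is a genuine gap: you have misidentified the Garside factors of $B_3$ and inverted the normal-form condition. The nontrivial proper elements of $S_3$ are $s_1$, $s_2$, $s_1s_2$ and $s_2s_1$ --- four, not two --- so the factors $w_k$ in a GNF need not be single Artin generators. Worse, for length-one factors the condition $\mathscr{R}(w_k)\supseteq\mathscr{L}(w_{k+1})$ reads $\{s_i\}\supseteq\{s_j\}$, which forces $j=i$: consecutive single-letter factors must \emph{repeat}, not alternate. (The normal form is left-greedy; the alternating pair $\tilde{s_1}\,\tilde{s_2}$ is not in normal form precisely because it coalesces into the single factor $\widetilde{s_1s_2}$, of Garside length $1$, whereas $\sigma_1^2=\tilde{s_1}\tilde{s_1}$ \emph{is} a valid GNF of Garside length $2$.) Consequently the words you propose to induct over are not Garside normal forms, and your key inductive claim --- that multiplying by the next alternating generator raises $\deg$ by $2$ without lowering $\val$ --- fails at the first nontrivial example:
\[
\pi_3(\sigma_1\sigma_2)=\begin{pmatrix}0 & v^3\\ -v & -v^2\end{pmatrix},
\]
so $\pl=2$, not $4$; the cancellation $-v^2+v^2=0$ that you hoped the ``alternation condition'' would preclude is exactly what happens for alternating words. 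This is still consistent with the proposition, but only because $\ell_G(\sigma_1\sigma_2)=1$, not $2$.

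Your overall strategy --- induction on Garside length while carrying a normalized ``template'' for the matrix --- is the right one and is essentially what the paper does, but the combinatorial foundation must be corrected first. The paper's invariant is a trichotomy on the two columns $c_1,c_2$ of $\pi_3(\sigma)$: either $\deg(c_1)=\deg(c_2)$ and $\val(c_1)=\val(c_2)$ (and $\sigma=\Delta^d$), or both inequalities go one way (and $\sR(\sigma)=\{s_1\}$), or both go the other way (and $\sR(\sigma)=\{s_2\}$). The descent condition then restricts the next factor to two possibilities (e.g.\ $s_1$ or $s_1s_2$ when the right descent set is $\{s_1\}$), and in each case an explicit column computation shows the trichotomy is preserved and $\pl$ grows by exactly $2$. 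Until you replace ``alternating Artin generators'' by the correct factor set $\{s_1,s_2,s_1s_2,s_2s_1\}$ with the correct adjacency rule, the no-cancellation statement you need is simply false.
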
    

\begin{proof}
    Let $\sigma \in B_3$ have GNF given by \eqref{eq:gnf} and let $\pi_3(\sigma)=[c_1,c_2]$, where $c_i$ are the column vectors in the matrix.  Then  the following trichotomy holds:
    \begin{enumerate}
        \item $\deg(c_1)=\deg(c_2), \val(c_1)=\val(c_2)$, and $\sigma=\Delta^d$ for some $d\in \ZZ$.  
        \item $\deg(c_1)>\deg(c_2), \val(c_1)>\val(c_2)$, and $\sR(\sigma)=\{s_1\}$.
        \item $\deg(c_1)<\deg(c_2), \val(c_1)<\val(c_2)$, and $\sR(\sigma)=\{s_2\}$.
    \end{enumerate}
   Moreover, in each case $\pl(\pi_3(\sigma))=2\ell_G(\sigma)$.

To see this, we induct on $\ell=\ell_G(\sigma)$.  If $\ell =0$ then we are in the first case of the trichotomy.  Note that in this case  $0=\pl(\pi_3(\sigma))=2\ell_G(\sigma)$.   Now let $\ell\geq1$, and suppose that $\sigma':=\Delta^d \tilde{w_{1}}\cdots \tilde{w_{\ell-1}}$ falls into the second case of the trichotomy.  Setting $\pi_3(\sigma')=[c_1',c_2']$, we then have that $\deg(c'_1)>\deg(c'_2), \val(c'_1)>\val(c'_2)$, $\sR(\sigma')=\{s_1\}$, and $\pl(\pi_3(\sigma'))=2\ell_G(\sigma')$.  The condition $\sR(\tilde{w_{\ell-1}}) \supseteq \sR(\tilde{w_{\ell}})$ forces $w_\ell=s_1$ or $w_\ell=s_1s_2$. If $w_\ell=s_1$ then 
$
[c_1,c_2]=[-v^2c_1',-vc'_1+c'_2],
$
and we are in case (2).  Otherwise, $w_\ell=s_1s_2$ and 
$
[c_1,c_2]=[-vc_2',v^3c'_1-v^2c'_2],
$
so we are in case (3). Either way we  have the equality $\pl(\pi_3(\sigma))=2\ell_G(\sigma)$.  The other cases follow similarly.
\end{proof}

Moving onto the $4$-strand braid group, it is easy to see that a
straightforward analogue of Proposition \ref{prop:3burau} does not
hold.  For instance, taking $\sigma=\tilde{u}$, where $u=s_1s_2s_1s_3$
we have that 
\begin{align*}
    \pi_4(\sigma)=\begin{pmatrix}
0 & 0 & -v^4\\
v^3 & v^2 & v^3 \\
0 & -v & -v^2
\end{pmatrix}
\end{align*}
We see that $\pl(\pi_4(\sigma))=3$ is not even, and  one cannot read off $\sR(u)=\{s_1,s_3 \}$ directly from the columns achieving  $\deg(\pi_4(\sigma))$. Nevertheless, we can ask whether generically $\pl$ is a good heuristic for Garside length. 

The following table shows the result of sampling $1000$ random braids in $B_4$ of each  Garside length, up to $50$.  Each braid is depicted by a blue dot in the plane, where the $x$-axis is labelled by the Garside length of the braid, and the $y$-axis is labelled by $\frac{1}{2}\pl$.  The $y=x$ line is drawn in red. 
\begin{align*}
\includegraphics[scale=.51]{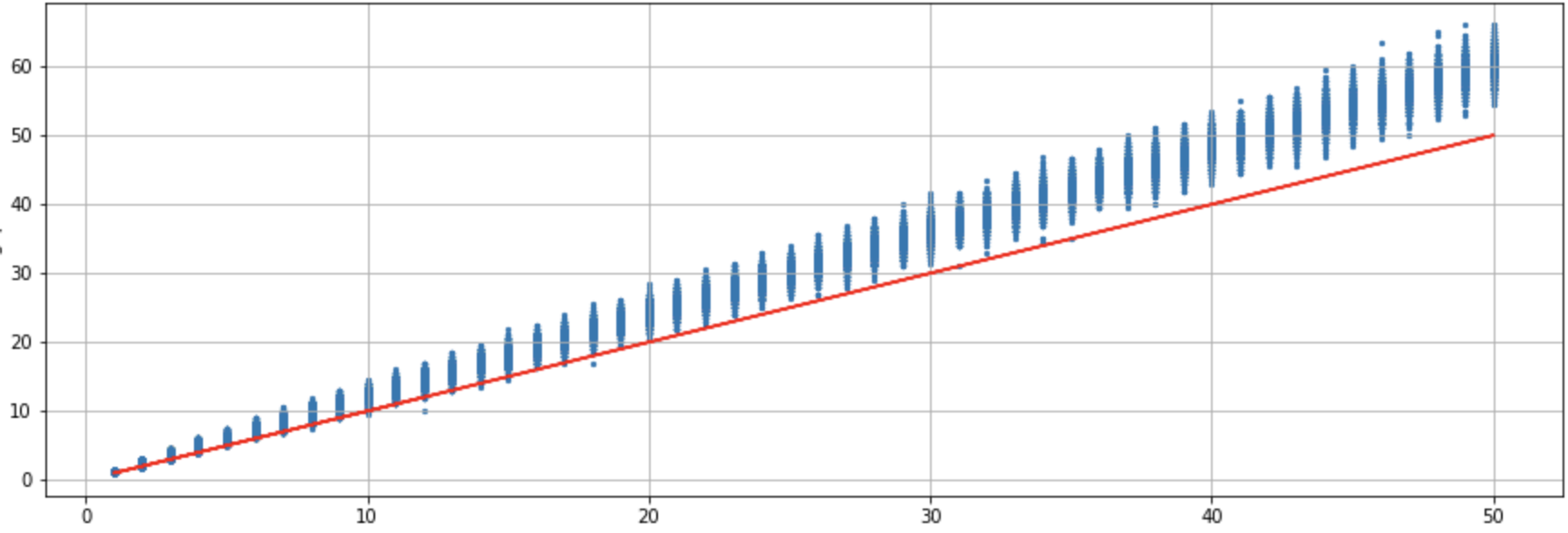}
\end{align*}

We see that it is almost always the case that $\frac{1}{2}\pl$ is larger than the Garside length. Moreover, there appears to be a   strong linear correlation
between these two statistics for generic braids. However, there
are a few exceptions corresponding to blue dots below the red
line.

Now consider an element
  \[ \sigma=\Delta^d \tilde{w_{1}}\cdots \tilde{w_{\ell}}\]
    in the kernel of some Burau representation. Because $\pl$ of a
    kernel element is zero and $\pl$ is unaffected by multiplication
    by $\Delta$  \eqref{eq:projlen-and-delta}  we have
    \[
0 = \pl(\sigma) = \pl( \tilde{w_{1}}\cdots \tilde{w_{\ell}})
      \]
      It is now reasonable to suspect that the  projlens of Garside prefixes of $\tilde{w_{1}}\cdots \tilde{w_{\ell}}$:
\begin{align}\label{eq:trajectory}
\pl(\tilde{w_{1}}), \pl(\tilde{w_{1}}\tilde{w_{2}}), \dots, \pl(
\tilde{w_{1}}\cdots \tilde{w_{\ell-1}}), \pl( \tilde{w_{1}}\cdots
\tilde{w_{\ell}})=0.
\end{align}
are typically smaller than those of a generic braid of the same
length. Thus searching for GNFs with low projlen might point us in the
direction of kernel elements.

\section{The algorithm}
We'll now describe our algorithm for sampling braids with
low $\pl$.  We picture the set-up for our search as placing a
``bucket'' at each point in $\ZZ_{\geq0}\times\ZZ_{\geq0}$.
Initially, these buckets are empty and over time they will be filled
with Garside normal forms of $4$-strand braids, and their Burau matrices.  The bucket $\sB_{(\ell,m)}$ at the point $(\ell,m)$ will only contain braids $\sigma$ such that $\ell_G(\sigma)=\ell$ and $\pl(\pi(\sigma))=m$.  

The method by which we fill the buckets is called \textbf{reservoir
  sampling} \cite{Vitter}.  This is a well-known algorithm which
selects a random sample of $k$ elements without repetition from a set
of $N$ elements which arrive sequentially, and where the
value of $N$ is unknown in advance. Typically, $N$ is enormous. Thus it
  is not possible to store all elements seen and the sampling is done in one pass.  At any
point in time, the selection of $k$ elements, i.e. the ``reservoir'',
should be uniformly distributed among all elements seen thus
far.

We recall the  idea in more detail since it is so fundamental to our approach.  
Suppose we're sampling elements from a set $X$ and that the
  elements arrive as a sequence $x_1, x_2, \dots, x_N$.

  Let us first consider the case $k=1$.  Our reservoir then
  consists of a single element, which at the $i$-th step we will
  denote $r_i$.  At the first step we let $r_1=x_1$.  In the
  second step, we replace $r_1$ with $x_2$ with probability
  $\frac{1}{2}$, otherwise $r_2 = r_1$. In the third step we replace
  $r_2$ with $x_3$ with probability $\frac{1}{3}$, otherwise $r_3 =
  r_2$. Note that indeed, for every $i$,  $r_i$ is uniformly
  distributed among $x_1,\hdots,x_i$.

If $k>1$, then the reservoir is a $k$-subset of $X$, which at the
$i$-th step we will denote $R_i=\{r_1^i,\hdots,r_k^i\}$.  First we just fill the reservoir: $r_j^1=x_j$ for $j\leq k$.  At the next step we randomly  generate a number $j\in\{1,\hdots,k+1\}$, and if $j\leq k$ replace the $j$-th element of $R_{1}$ by $x_{k+1}$:
\begin{align*}
    r_q^{2}:=\begin{cases}
        x_{k+1} \text{ if } q=j, \\
        r_q^{1} \text{ otherwise. }
    \end{cases}
\end{align*}
Otherwise,  $R_{2}=R_1$. Next, randomly generate a number $j\in\{1,\hdots,k+2\}$, and if $j\leq k$
replace the $j$-th element of $R_{2}$ by $x_{k+2}$, and so on.  At the $i$-th step, we have that $R_i$ is a
uniformly distributed $k$-subset of $\{x_1,\hdots,x_{i+k-1}\}$.

Going back to our set-up, we will now describe an adaptation of the above ideas to place braids in appropriate buckets.  At the $\ell$-th step in our algorithm, we will  be placing braids in buckets at points $(\ell,m)$ for various $m$. We  fix ahead of time an upper bound $k$, which is the maximum number of braids contained in a single bucket.  Set $\sB_\ell=\bigcup_{m\geq0}\sB_{(\ell,m)}$.  

To begin place the identity $e\in B_4$ in $\sB_{(0,0)}$.  Now let $\ell>0$, and at the $\ell$-th step proceed as follows:

\begin{algorithm}[hbt!]
\begin{algorithmic}
\State Set $N_{m}=0$ for every $m$ \Comment{Counts how many braids we've tried to place in $\sB_{(\ell,m)}$}
\For{$\sigma \in \sB_{\ell-1}$}
\For{$\tilde{u}$ a Garside suffix of $\sigma$}
\State $m=\pl(\sigma\tilde{u})$
\State $M=|\sB_{(\ell,m)}|$
\If{$M<k$}
    \State place $\sigma\tilde{u}$ and its Burau matrix in $\sB_{(\ell,m)}$
\Else{ randomly generate a number $j \in \{1,\hdots,N_{m} \}$}
    \If{$j<k$}
        \State replace $j$-th element of $\sB_{(\ell,m)}$ by $\sigma\tilde{u}$ and its Burau matrix
    \Else{ discard $\sigma\tilde{u}$}
    \EndIf
\EndIf
\State Set $N_{m}:=N_{m}+1$
\EndFor
\EndFor
\end{algorithmic}
\end{algorithm}
Note that since we are storing the Burau matrices along the way, it is easy for us to compute $\pl(\sigma\tilde{u})$.  Also, we can easily modify this algorithm to study the Burau representation modulo primes (we simply remember the $p$-Burau matrices instead).

\section{Main result}
We'll now describe the most interesting output of our algorithm:
 in the case $p=5$ we obtained several elements in the kernel of $\pi\otimes\FF_5$. We stress that these are the first known kernel elements in Burau representations $\pi\otimes\FF_p$ with $p>3$, and this represents the first explicit progress on this question in 25 years! 

 The smallest we found, $\kappa$, has Garside length $54$. We have
 \[
\kappa = \Delta^{-27}*\sigma
   \]
where $\Delta$ denotes the Garside element and $\sigma$ is the following word in the Artin generators:
\begin{gather*} 
  1, 3, 1, 3, 1, 3, 2, 2, 1, 3, 3, 2, 2, 1, 3, 3, 2, 2, 2, 1, 3, 1,
  3, 2, 1, 2, 1, 3, 1, 3, 2, 1, 2, 1, 3, 1, 3, 2, 2, 2, 1, 3, 3, \\
  2, 2, 1, 3, 3, 2, 2, 1, 3, 1, 3, 1, 2, 3, 2, 1, 1, 3, 2, 1, 2, 1, 3,
  1, 3, 2, 1, 2, 1, 3, 1, 3, 2, 2, 1, 3, 2, 2, 1, 3, 2, 2, 2, 1, \\
  3, 3, 2, 2, 1, 3, 3, 2, 2, 1, 3, 1,  2, 3, 2, 1, 1, 3, 2, 1, 2, 1,
  3, 1, 3, 2, 1, 2, 1, 3, 1, 2, 3, 2, 1, 1, 3, 2, 2, 1, 3, 3, 2, \\
  2, 1, 3, 3, 2, 2, 2, 1, 3, 2, 2, 1, 3, 1, 2, 3, 2, 2, 1, 3, 1, 2, 3,
  2, 2, 1, 3, 1, 2, 3, 2, 1
\end{gather*}
The length of $\sigma$ in the Artin generators is 162.  

The following plot depicts the journey of $\sigma$ through the various buckets, as described in \eqref{eq:trajectory}: 
\begin{align*}
\includegraphics[scale=.35]{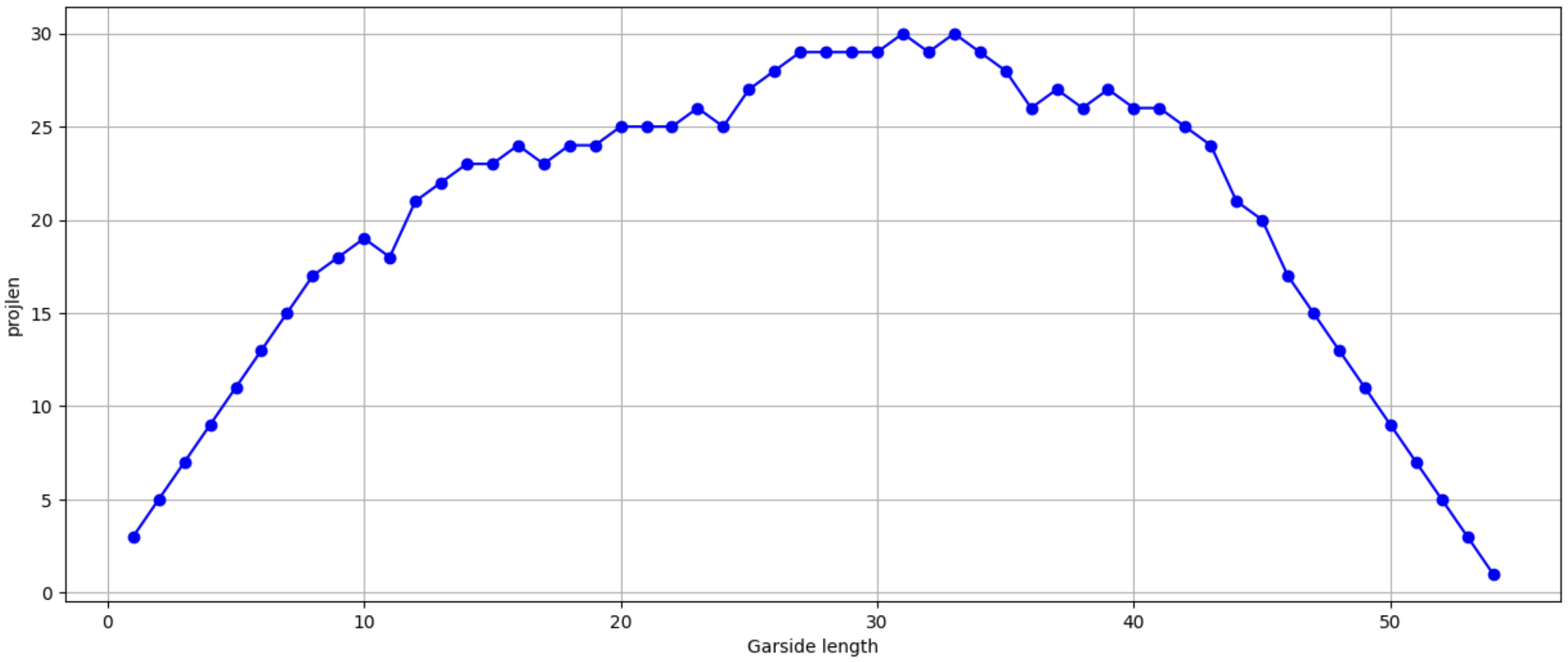}
\end{align*}
The $x$-axis is labelled by Garside length, the $y$-axis by $\pl$\footnote{It's actually labelled by $\pl+1$ but we ignore this technicality.}, and each blue dot represents a bucket which contains a Garside prefix of $\kappa$.  

The rightmost dot corresponds to the bucket containing $\sigma$.  It's
interesting to note that this trajectory follows a generic path until
about Garside length $30$, where suddenly the reservoir sampling
starts finding elements with smaller than expected $\pl$.  Around
length $40$, the algorithm hits a ``point of no return'', and makes a
beeline for $\sigma$.

In case the reader is interested, here is the Garside normal form of $\kappa$:
\begin{gather*} \kappa = 
(\Delta^{-27};s_{0} s_{2}, s_{0} s_{2}, s_{0} s_{2} s_{1}, s_{1} s_{0}
s_{2}, s_{2} s_{1}, s_{1} s_{0} s_{2}, s_{2} s_{1}, s_{1}, s_{1} s_{0}
s_{2},s_{0} s_{2} s_{1} s_{0}, s_{1} s_{0} s_{2}, \\
s_{0} s_{2} s_{1} s_{0}, s_{1} s_{0} s_{2}, s_{0} s_{2} s_{1}, s_{1}, s_{1} s_{0} s_{2}, s_{2} s_{1}, s_{1} s_{0} s_{2}, s_{2} s_{1}, s_{1} s_{0} s_{2}, s_{0} s_{2}, s_{0} s_{1} s_{2} s_{1} s_{0}, s_{0} s_{2} s_{1} s_{0}, \\s_{1} s_{0} s_{2}, s_{0} s_{2} s_{1} s_{0}, s_{1} s_{0} s_{2}, s_{0} s_{2} s_{1}, s_{1} s_{0} s_{2} s_{1}, s_{1} s_{0} s_{2} s_{1}, s_{1}, s_{1} s_{0} s_{2}, s_{2} s_{1}, s_{1} s_{0} s_{2}, s_{2} s_{1}, s_{1} s_{0} s_{2}, \\s_{0} s_{1} s_{2} s_{1} s_{0}, s_{0} s_{2} s_{1} s_{0}, s_{1} s_{0} s_{2}, s_{0} s_{2} s_{1} s_{0}, s_{1} s_{0} s_{2}, s_{0} s_{1} s_{2} s_{1} s_{0}, s_{0} s_{2} s_{1}, s_{1} s_{0} s_{2}, s_{2} s_{1}, \\s_{1} s_{0} s_{2}, s_{2} s_{1}, s_{1}, s_{1} s_{0} s_{2} s_{1}, s_{1} s_{0} s_{2}, s_{0} s_{1} s_{2} s_{1}, s_{1} s_{0} s_{2}, s_{0} s_{1} s_{2} s_{1},s_{1} s_{0} s_{2}, s_{0} s_{1} s_{2} s_{1} s_{0})
\end{gather*}

Since randomness is built into the algorithm, every time we run it we
get different outcomes.  About $20\%$ of the time we actually find a
kernel element, and this takes about a couple of hours on a standard
personal computer.

Other runs of our algorithm discovered two other elements in the
kernel of Garside lengths 59 and 65 respectively. They are
\[
\kappa_1 = \Delta^{-29} \sigma_1 \quad \text{and} \quad \kappa_2 =
\Delta^{-33} \sigma_2
\]
where $\sigma_1$ and $\sigma_2$ are given by the following words in
the Artin generators, of lengths 174 and 198 respectively:
\begin{gather*}
    \sigma_1 ={\scriptstyle
(1, 2, 1, 3, 2, 2, 1, 3, 1, 3, 2, 2, 2, 1, 3, 1, 2, 2, 1, 3, 1, 2, 2,
1, 3, 1, 3, 1, 2, 3, 2, 1, 1, 2, 3, 2, 2, 1, 3, 1, 2, 3, 2, 2, 1, 3,
1, 3, 2, 2, 2, 1, 3, 1, 2, 2, 1, 3, 1, }\\ {\scriptstyle 2, 2, 2, 1,
3, 1, 2, 3, 2, 2, 1, 3, 1, 2, 3, 2, 2, 1, 3, 1, 2, 3, 2, 1, 1, 3, 1,
3, 1, 3, 2, 2, 1, 3, 1, 2, 2, 1, 3, 1, 2, 2, 2, 1, 3, 2, 2, 1, 3, 1,
3, 2, 1, 2, 1, 3, 1, 3, 2, 1, 2, 1, }\\{\scriptstyle 3, 1, 2, 3, 2, 1,
1, 3, 2, 2, 1, 3, 3, 2, 2, 1, 3, 3, 2, 2, 1, 3, 1, 2, 3, 2, 1, 1, 3,
2, 1, 2, 1, 3, 1, 3, 2, 1, 2, 1, 3, 1, 3, 2, 2, 1, 3, 2, 2, 2, 1, 3,
1)} \\
    \sigma_2 ={\scriptstyle
(1, 3, 1, 3, 1, 3, 2, 2, 1, 3, 3, 2, 2, 1, 3, 3, 2, 2, 1, 3, 1, 2, 3,
2, 1, 1, 3, 2, 1, 2, 1, 3, 1, 3, 2, 1, 2, 1, 3, 1, 3, 2, 2, 2, 1, 3,
3, 2, 2, 1, 3, 3, 2, 2, 1, 3, 1, 2, 3, 2, }\\ {\scriptstyle 1, 1, 3,
2, 1, 2, 1, 3, 1, 3, 2, 1, 2, 1, 3, 1, 3, 2, 2, 2, 1, 3, 3, 2, 2, 1,
3, 3, 2, 2, 1, 3, 1, 2, 3, 2, 1, 1, 3, 2, 1, 2, 1, 3, 1, 3, 2, 1, 2,
1, 3, 1, 3, 2, 2, 2, 1, 3, 3, 2, 2, 1, 3, 3, 2, 2, 1, 3, 1, }\\ {\scriptstyle 2, 3, 2,
1, 1, 3, 2, 1, 2, 1, 3, 1, 3, 2, 1, 2, 1, 3, 1, 3, 2, 2, 2, 1, 3, 3,
2, 2, 1, 3, 3, 2, 2, 1, 3, 1, 2, 3, 2, 1, 1, 3, 2, 1, 2, 1, 3, 1, 3,
2, 1, 2, 1, 3, 1, 3, 2, 2, 1, 3, 2, 2, 1, 3, 2, 2, 1, 3, 2) }
  \end{gather*}
  Here are the trajectories of $\sigma_1$ and $\sigma_2$ through the buckets:
  \begin{align*}
\includegraphics[scale=.2]{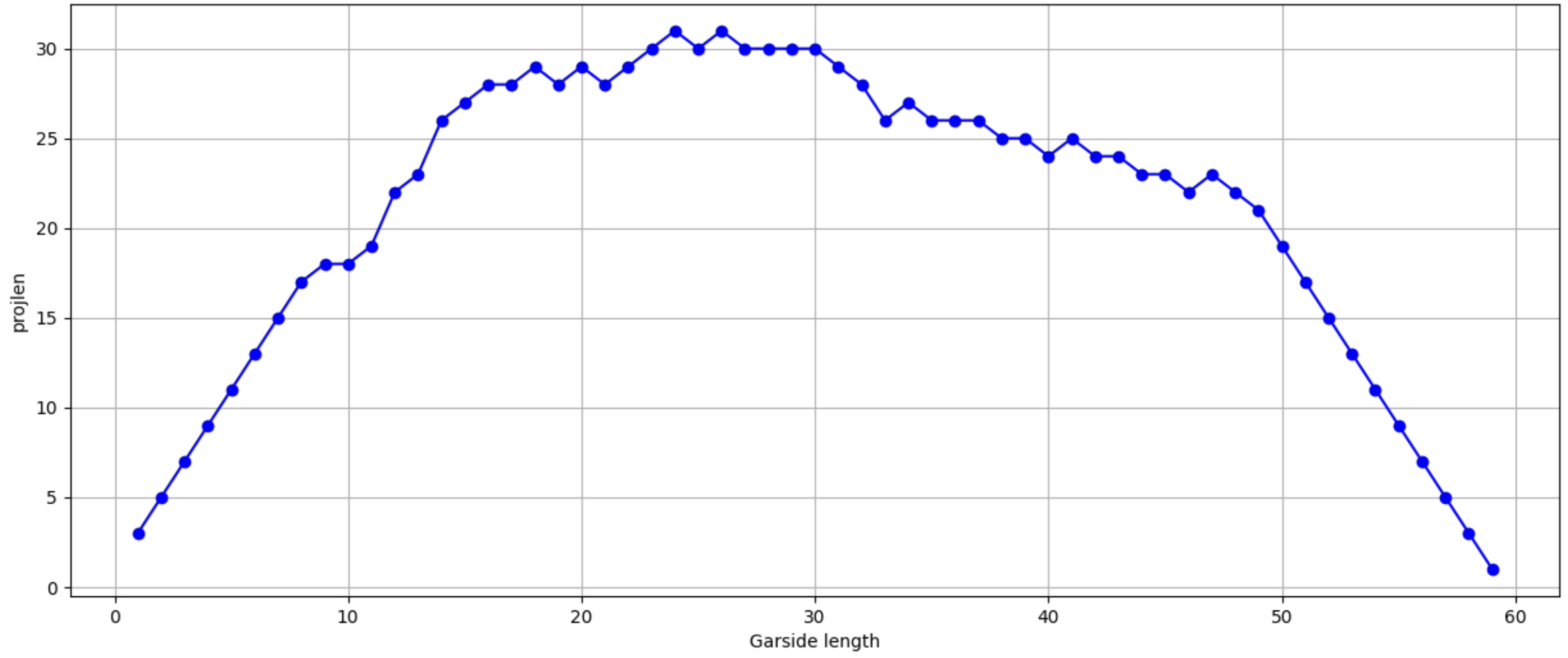}
\includegraphics[scale=.2]{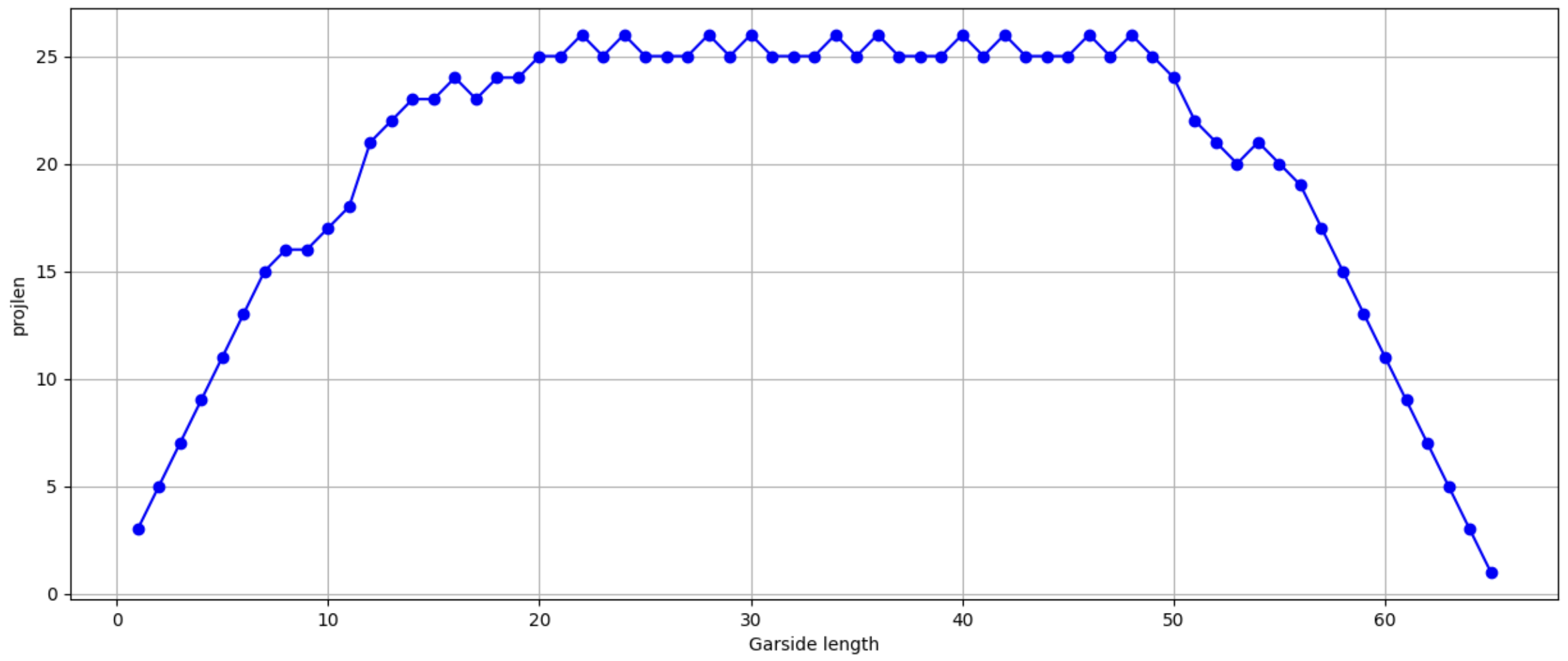}
  \end{align*}

  \begin{Remark}
    The reader wishing to explore more can experiment with the
    notebook ``p=5 kernel elements'' available at
    \cite{github}. We have also implemented a check of our results
    in Magma in the file ``Magma check.m'' which is also
    available at \cite{github}. It should be easy to modify the
    Magma file to suit any computer algebra system.
  \end{Remark}

\section{How good is our algorithm?}

We regard the fact that our algorithm is able to discover a kernel
element nearly 25 years after the last discovery of a kernel element
(for $p=3$ in \cite{CL98}) as some evidence that our
approach is interesting. On the other hand, it is interesting to ask:
can our algorithm rediscover known kernel elements?

\subsection{n=4} Here we see a plot Garside of length ($x$-axis)
against the minimal projlens found ($y$-axis),
over several runs of our algorithms modulo various integers $m =
2,3,4,5,6,7$:
\begin{align*}
\includegraphics[scale=.51]{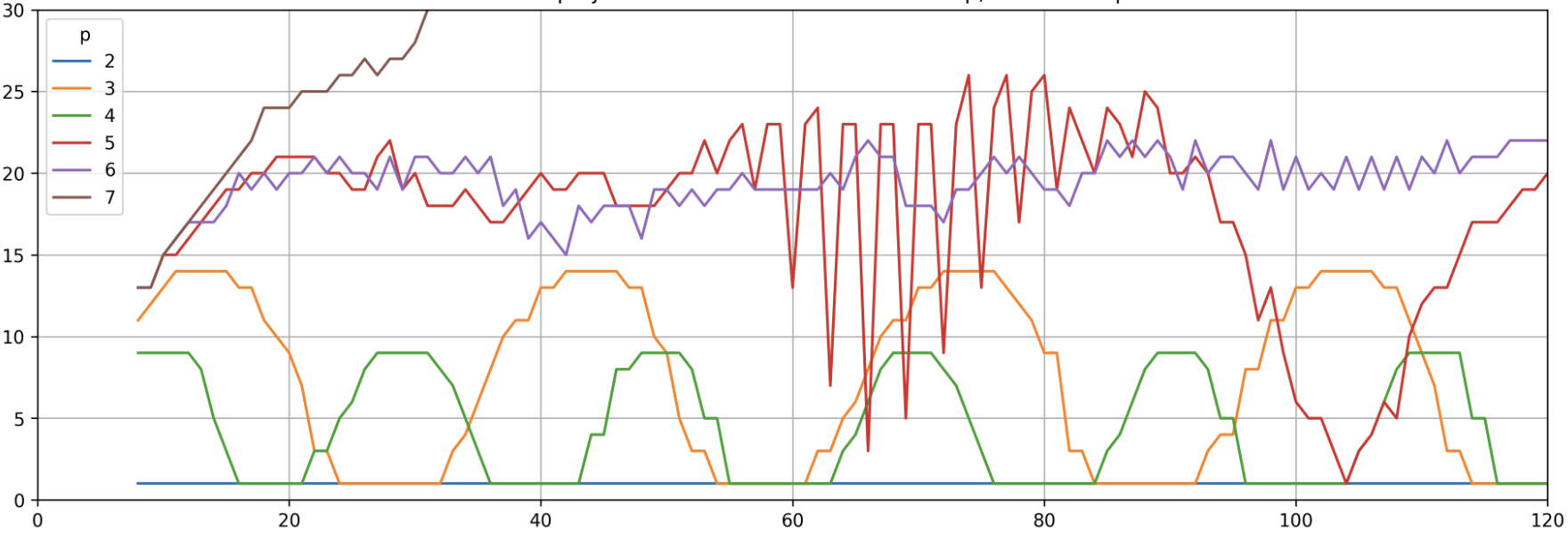}
\end{align*}
It immediately finds many kernel elements modulo $2$ and $3$. It also
finds kernel elements modulo $4$. Note also the jagged red line:
during these runs the algorithm found an element of Garside length
around 65 with low projlen, but did not find a kernel element of
this length. It did, however find a kernel element of Garside length
around 105 (where the red line finally touches the $x$-axis). Finally,
note that although we know that kernel elements exist modulo $6$, our
algorithm didn't find them. (The purple line.)

\subsection{n=5,6} Despite several attempts we have not been able to
recover Bigelow's (integral) kernel elements for $n=5$ and $n=6$. How
close are we to finding these elements?

It turns out that there is a beautiful and simple idea that allows us to accurately
answer this question. For concreteness, let us take $n=6$ in which case Bigelow's kernel
element is of Garside length $16$:
\[ \beta=\Delta^d \tilde{w_{1}} \tilde{w_2} \cdots \tilde{w_{16}}. \]
Now we can modify our code slightly to force it to add each of the
Garside divisors
\[
  \beta_1 = \tilde{w_{1}}, \quad 
  \beta_2 = \tilde{w_{1}} \tilde{w_2}, \quad \dots,  \quad
  \beta_{16} = \tilde{w_{1}} \tilde{w_2} \cdots \tilde{w_{16}}
\]
to their appropriate buckets during the reservoir search. In other
words, we peform the reservoir sampling as usual, however we force it
to successfully find $\beta$:
\begin{align*}
\includegraphics[scale=.51]{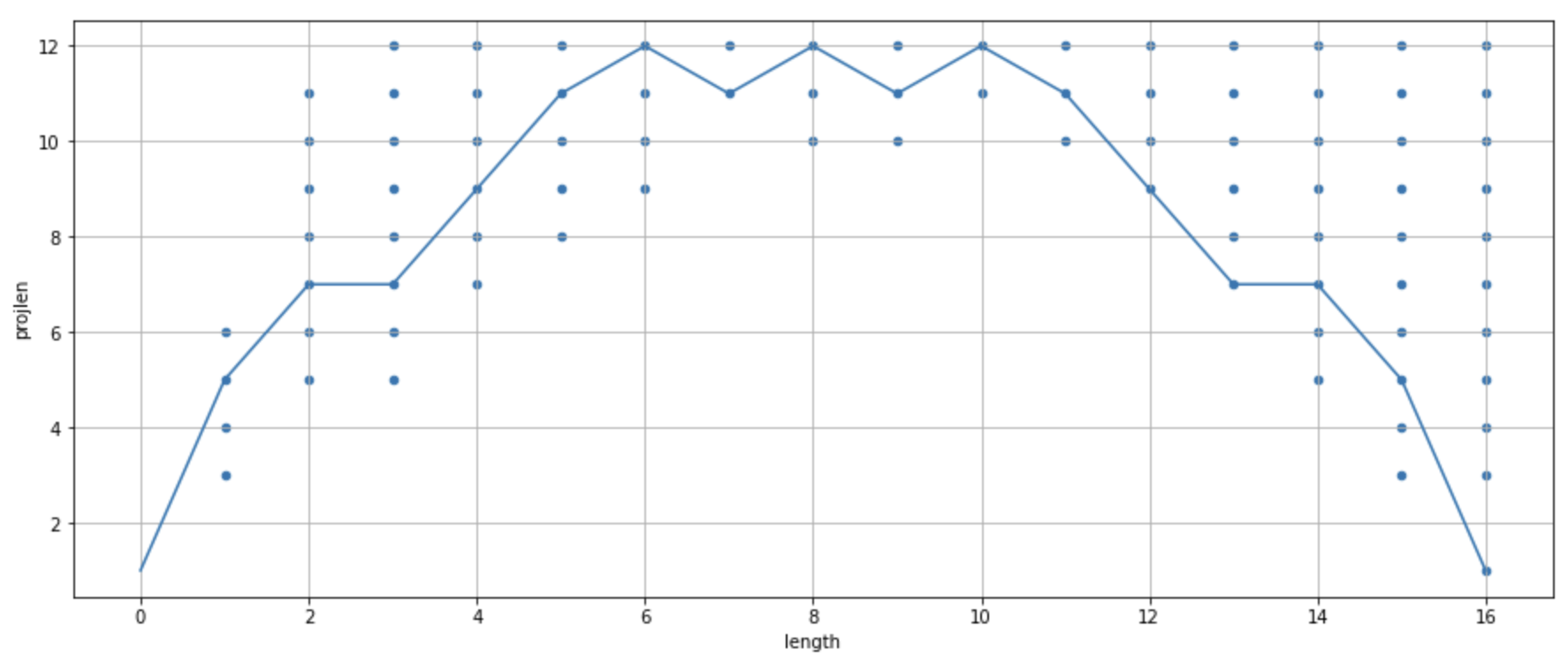}
\end{align*}
Now for each reservoir containing $\beta_i$ we can compute the
probability that a random sample would contain $\beta_i$. If we denote
this probability by $P(\beta_i)$, then
\[
P(\beta) = \prod_{i=1}^{16} P(\beta_i) = \prod_{i=1}^{16}
\frac{k}{\max( r(\beta_i), k)}
\]
where $k$ denotes the bucket size as above, and $r(\beta_i)$ denotes
the total number of elements seen by the bucket containing $\beta_i$.

With $k=500$ this probability is smaller than $10^{-6}$. Increasing
the bucket size to $10^5$ decreased this probability to around
$0.0005$. At this point it is believable that algorithmic improvements
(or simply a bigger computer and bigger buckets!) new kernel elements for $n=6$ might be
found. A similar analysis for $n=5$ shows that this case is considerably
harder.

\subsection{Future Directions}\label{sec:future-directions} We briefly comment on three avenues
which we believe warrant further exploration.

\subsubsection{Monte Carlo methods} The task of searching through
Garside normal forms in order to find kernel elements is a textbook
example of tree search. This problem features prominently in computer
Chess and Go. In Go, major progress was made in the 1990s and 2000s by
employing Monte Carlo evaluation strategies \cite{Go}: in order to
decide the strength of a board position, one randomly finishes the
game 5000 times and sees how often one wins. We tried a similar method
here:
\begin{enumerate}
\item A database $D$ of promising nodes (with score) is initiated with the
  identity element.
\item At each step, $\sigma$ is sampled from $D$ (randomly with
  weighting based on the score). We then do reservoir sampling starting from each 
  Garside suffix $\sigma \widetilde{u}$ for $N$ steps. Each suffix
  is then added to $D$, with score based on the lowest $\pl$ seen.
\end{enumerate}
(Thus, we regard exploring the tree of all possible Garside normal
forms as a one-player game, where the goal is to find kernel
elements, and a ``move'' consists of replacing the current element by
a Garside successor.)

We implemented this algorithm and it consistently found kernel
elements for $p=5$. We did several long runs in other settings (e.g. $(n=4,
p=7)$, $(n=5,p=41)$ etc.) without finding kernel elements. In this
setting there are several design choices (e.g. how to implement score and
select based upon it, how to choose $N$, \dots) which have a big
impact on performance. This is worth investigating systematically and
could lead to a much better algorithm.

\subsubsection{Machine learning methods} We begun this project with
the aim of employing machine learning methods. We outline here one
unsuccessful attempt to use vanilla supervised learning to improve our
sampling. Many other experiments are possible, and we hope that our
software can provide a good basis for further experiments.

Consider all elements of Garside length $\ell$. Imagine an
oracle $O$ which takes as input the matrix of an element of Garside
length $\ell$ and tells us the minimum $\pl$ amongst 
all Garside successors of Garside length $\ell + k$ for some value of
$k$ (e.g. $k=5$). Then it seems intuitive that using the oracle $O$
to weight reservoir sampling (i.e. elements with low projlen in $k$
further steps are more likely to be kept in a bucket) would lead to
better results.

With this motivation in mind, we tried to train a vanilla neural network
which takes as input the matrix of $\sigma$, and attempts to predict
the minimal $\pl$ in $k$ steps time. Our models attained reasonable
training accuracy ($\sim 80\%$), but seemed to make no difference at
all to the long-term performance of the algorithm.\footnote{Some further details on our algorithm:
  We encode a matrix $M$ of Laurent polynomials as a
  list of matrices encoding the coefficients of $v^i$. Because we only
  care about our matrices up to scalar multiple, we can assume that
  the non-zero matrices range from 0 to the $\pl$: $M_0, M_1,
  \dots, M_{\pl(M)}$. Now, after multiplying $M$ by all Garside
  successors of length $k$, it is easy to see that the resulting $\pl$s
  only depends on the first and last $k'$ matrices in our list, i.e. on $M_0, M_1,
  \dots, M_{k'}, M_{\pl(M)-k'+1}, \dots M_{\pl(M)}$ for some small
  value of $k'$. We further
  simplified our dataset by remembering only $M_0', M_1',
  \dots, M_{k'}', M_{\pl(M)-k'+1}', \dots M_{\pl(M)}'$ where $M_i'$ is
  a zero-one matrix whose entries
  record which entries in $M_i'$ are non-zero. This list was then
  serialized and used as input to a vanilla neural network.} In other
words, adding the neural network made no discernable difference to the
smallest $\pl$ found for large Garside lengths.
Thus, our attempt to use neural networks to spot patterns in Burau
matrices was unsuccessful. This suggests that such patterns are
either not there, or are difficult to detect.\footnote{One can also
  imagine that a drop in $\pl$ is a rare event,
  which is too rare to be picked up by the neural network.}

\subsubsection{Commutators} It is striking that all kernel elements
found in \cite{CL97,CL98,Bigelow99} are commutators. It is tempting to
try to modify our search regime to use this fact as a prior. It is not
clear how to do so, but a reasonable proposal could lead to better
results. We have not tried to express the elements that we have found
as (close relatives of) commutators.

\printbibliography
\end{document}